\newtheorem{prop}{Proposition}[section]
\newtheorem{lem}[prop]{Lemma}
\newtheorem{thm}[prop]{Theorem}
\newtheorem{cor}[prop]{Corollary}
\newtheorem{theorem}{Theorem}
\theoremstyle{definition}
\newtheorem{definition}[prop]{Definition}
\newtheorem{exam}[prop]{Example}
\begin{document}

\title{GEOMETRIC APPROACH TO\\ GRAPH MAGNITUDE HOMOLOGY}
\author[$\star$]{Yasuhiko Asao}
\author[$\dagger$]{Kengo Izumihara} 
\affil[$\star$]{Graduate School of Mathematical Sciences, University of Tokyo, Tokyo, Japan \texttt{ (asao@ms.u-tokyo.ac.jp)}}
\affil[$\dagger$]{Graduate School of Mathematics, Kyushu University, Fukuoka, Japan \texttt{ (izumihara.kengo.587@s.kyushu-u.ac.jp)}}
\date{}
\maketitle

\begin{abstract}
In this paper, we introduce a new method to compute magnitude homology of general graphs. 
To each direct sum component of magnitude chain complexes, 
we assign a pair of simplicial complexes whose simplicial chain complex is isomorphic to it.
First we states our main theorem specialized to trees, which gives another proof for the known fact that trees are diagonal.  After that, we consider general graphs, which may have cycles. We also demonstrate some computation as an application.
\end{abstract}

\section{Introduction}
Leinster (\cite{Lein}) introduced magnitude of finite metric spaces which measures `` the number of efficient points". Magnitude homology has been invented as a categoryfication of magnitude of a graph which is equipped with a graph metric, by Hepworth-Willerton (\cite{Hep}). 
Magnitude homology $MH_{k,\ell}(G)$ of a graph $G$ is defined by the $k$-th homology group of a chain complex $MC_{\ast,\ell}(G)$, whose chain groups are generated by tuples of vertices of length $\ell$.

Several tools for computing magnitude homology of a graph have been studied so far.
For examples, Hepworth-Willerton (\cite{Hep}) proves a Mayer-Vietoris type exact sequence and a K\"{u}nneth type formula, and Gu (\cite{Gu}) uses algebraic Morse theory for computation for some graphs.
Although, in general, computation of magnitude homology remains a difficult problem.

In this paper, we introduce another method to compute magnitude homology of general graphs. 
Our strategy is to replace the computation of magnitude chain complex $MH_{k,\ell}(G)$ by that of simpicial homology. 
A similar method using order complex is studied by Kaneta-Yoshinaga (\cite{Yos}), 
whereas we assign simplicial complexes in another way.
A subtle difference from Kaneta-Yoshinaga's method which restricts us to work within a range with no 4-cuts, is that our method can be applied to general graphs.

For a magnitude chain complex $MC_{\ast, \ell}(G)$, we denote by $MC_{\ast,\ell}(a, b)$ a direct sum component of it, which consists of tuples with ends $a$ and $b$. Our main result is the following which appears as Theorem \ref{thm4.2} in this paper. We assume that graphs are connected and contain no loops.
\begin{theorem}
\label{thm1.1}
Let $a, b$ be vertices of a graph $G$, and fix an integer  $\ell\ge 3$. 
Then we can construct a pair of simplicial complexes $(K_\ell(a, b),K'_\ell(a, b))$ which satisfies
\[
C_\ast(K_\ell(a, b), K'_{\ell}(a, b)) \cong MC_{\ast+2,\ell}(a, b).
\]
In particular, we have
\[
MH_{k, \ell}(a, b)\cong H_{k-2}(K_\ell(a, b), K'_\ell(a, b))
\]
for $k,\ell\ge3$. Moreover, for $k = 2$, we also have
\[
MH_{2, \ell}(a, b) \cong \begin{cases}H_{0}(K_\ell(a, b), K'_\ell(a, b)) & {\rm if}\ d(a, b) < \ell, \\ \tilde{H}_{0}(K_\ell(a, b))& {\rm if}\ d(a, b) = \ell,\end{cases}
\]
where $\tilde{H}_{\ast}$ denotes the reduced homology group.
\end{theorem}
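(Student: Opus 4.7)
\emph{Strategy.} The magnitude chain group $MC_{k,\ell}(a,b)$ is freely generated by tuples $(a=x_0,x_1,\dots,x_{k-1},x_k=b)$ with $x_{i-1}\neq x_i$ and $\sum_i d(x_{i-1},x_i)=\ell$, and its differential alternates signs over those vertex deletions $x_i\mapsto\widehat{x_i}$ that preserve the total length. Because the endpoints are fixed, a generator is determined by its interior list $(x_1,\dots,x_{k-1})$ of length $k-1$, which is the natural data for a $(k-2)$-simplex. The plan is therefore to build $K_\ell(a,b)$ so that its $(k-2)$-simplices are in bijection with \emph{all} interior tuples of length at most $\ell$, and to let $K'_\ell(a,b)$ be the subcomplex spanned by those of length strictly less than $\ell$; the relative differential on $C_\ast(K_\ell,K'_\ell)$ then retains exactly the face removals that keep the total length equal to $\ell$, matching the distance-preservation clause of the Hepworth--Willerton differential.

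\emph{Construction and chain isomorphism.} I would take the vertices of $K_\ell(a,b)$ to be pairs $(v,i)$ with $v\in V(G)\setminus\{a,b\}$ and $i\in\{1,\dots,\ell-1\}$ satisfying $d(a,v)\le i$ and $d(v,b)\le\ell-i$, and declare a set $\{(v_1,i_1),\dots,(v_m,i_m)\}$ with $i_1<\cdots<i_m$ to be a simplex when the associated tuple $(a,v_1,\dots,v_m,b)$ has distinct consecutive vertices and total length at most $\ell$. The position coordinate $i$ supplies the canonical ordering needed to turn an unordered simplex back into an \emph{ordered} magnitude generator, and is essential once $G$ has cycles, where a single $v$ may be reachable from $a$ at several different positions. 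Letting $K'_\ell(a,b)$ be cut out by strict inequality on the total length, a direct count produces an isomorphism on chain groups $C_{k-2}(K_\ell,K'_\ell)\cong MC_{k,\ell}(a,b)$. The sign calculation is then routine: the simplicial face at $(v_j,i_j)$ survives in the quotient precisely when $d(v_{j-1},v_{j+1})=d(v_{j-1},v_j)+d(v_j,v_{j+1})$, which is exactly the condition that $\partial_j$ is nonzero in the magnitude complex.

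\emph{Consequences and the cases at $k=2$.} The homology isomorphism for $k,\ell\ge 3$ is then immediate from the definition of relative simplicial homology. For $k=2$, the dimension shift places magnitude generators at simplicial degree $0$, while the degenerate tuple $(a,b)$ (no interior vertex) lives at degree $-1$, i.e., in the augmentation. If $d(a,b)<\ell$ then $(a,b)$ fails to have length $\ell$, so $MC_{1,\ell}(a,b)=0$ and the formula reduces to $H_0(K_\ell,K'_\ell)$. If $d(a,b)=\ell$ then every interior tuple included in $K_\ell$ automatically has total length exactly $\ell$, forcing $K'_\ell=\emptyset$; simultaneously $(a,b)$ contributes a generator of $MC_{1,\ell}(a,b)$, and the boundary $MC_{2,\ell}\to MC_{1,\ell}$ becomes the augmentation $C_0(K_\ell)\to\mathbb{Z}$, so the resulting homology is $\tilde H_0(K_\ell)$ by the standard identification.

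\emph{Main obstacle.} The hard part is arranging the three ingredients---the position index, the inequality constraints on admissible simplices, and the relative subcomplex $K'_\ell$---so that the bijection on generators, the sign matching on faces, and the interpretation of the length-preservation condition are simultaneously correct. This is delicate precisely when cycles are present: a given intermediate vertex may appear at several admissible positions with different distance windows, and one must check that the proposed indexing is both complete (every magnitude generator arises) and non-redundant (no tuple is counted twice). Beyond that, the rest of the argument should reduce to bookkeeping on signs and the exactness properties of relative chain complexes.
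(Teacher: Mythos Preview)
Your overall strategy---label interior vertices with a position index, let $K'_\ell$ collect the tuples of length $<\ell$, and read off the magnitude differential from the relative simplicial one---is exactly the paper's strategy, and your discussion of the $k=2$ case via the augmentation is essentially the paper's argument in Corollary~\ref{cor4.3}. However, the specific complex you write down does not work, for two reasons.

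First, and most seriously, your vertex constraints $d(a,v)\le i$ and $d(v,b)\le\ell-i$, together with the global bound $\mathrm{L}(a,v_1,\dots,v_m,b)\le\ell$, do \emph{not} pin down the index sequence $(i_1,\dots,i_m)$ once the tuple $(a,v_1,\dots,v_m,b)$ is fixed with length exactly $\ell$. You never impose any relation between $i_{j+1}-i_j$ and $d(v_j,v_{j+1})$. On the $4$-cycle $a\!-\!v\!-\!b\!-\!w\!-\!a$ with $\ell=4$, the magnitude generator $(a,v,w,b)$ has length $1+2+1=4$, yet all three index pairs $(i_1,i_2)\in\{(1,2),(1,3),(2,3)\}$ satisfy your vertex constraints, so it is counted three times in your $C_1(K_4,K'_4)$ and the claimed bijection fails. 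The paper avoids this by defining a simplex $\{(x_{i_1},i_1),\dots,(x_{i_k},i_k)\}$ to exist only when $(a,x_{i_1},\dots,x_{i_k},b)$ occurs as a subsequence, with those very indices, of an actual path in $P_{\le\ell}(a,b)$; this forces $d(x_{i_s},x_{i_{s+1}})\le i_{s+1}-i_s$, and when the total length is $\ell$ these inequalities collapse to equalities, giving the uniqueness statement the paper isolates as Lemma~\ref{lem4.1}. You correctly flag non-redundancy as ``the hard part,'' but your construction does not achieve it.

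Second, two smaller defects: restricting interior vertices to $V(G)\setminus\{a,b\}$ loses genuine magnitude generators such as $(a,c,a,d,b)$, since $a$ and $b$ may legitimately reappear in the interior; and your ``distinct consecutive vertices'' clause breaks closure under faces (remove the middle term from $\{(c,1),(d,2),(c,3)\}$ and the resulting pair corresponds to $(a,c,c,b)$, which you have disallowed). The paper simply allows such degenerate faces and observes that they automatically have length $<\ell$, hence lie in $K'_\ell(a,b)$ and die in the quotient.
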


Our theorem yields an interpretation of magnitude homology groups as a homology group of a simplicial complex.
Therefore, our method allows us to apply sophisticated tools of homotopy theory.
In the special cases of $2 \leq \ell \leq 4$ we obtain a visualization of the magnitude chain complex
since the dimensions of the corresponding simplical complexes are 0, 1,  and 2, respectively.

The organization of this paper is the following:  After giving some basic definitions and notations in section 2, we first give a new method for computing magnitude homology of trees based on our simplicial strategy. Obtained computational results coincide with \cite[Corollary 31]{Hep}. The computation for a tree is simpler than that for general graphs studied in the following section,  because of the fact that the magnitude chain complex of a tree can be decomposed into simple ones. In section 4, we give a proof of our main theorem, and compute the magnitude homology of the graph $Sq_{2}$ introduced in \cite{Lein2} as an application.

\section{Preliminaries}
In this section, we recall some basic definitions for graphs and their magnitude homology together with related notation. Main definitions are taken from \cite{Hep}.
\subsection{Simplicial complexes}
\begin{definition}
Let $V$ be a set, and let $P(V)$ be its power set. A subset $S \subset P(V)\setminus \{\emptyset\}$ is called a \textit{simplicial \ complex} if it satisfies that
\[
B \in S {\rm \ for \ every\ }\emptyset \neq B \subset A \in S .
\]
A subset $S'$ of a simplicial complex $S$ is called {\it a subcomplex of } $S$ if $S'$ itself is a simplicial complex.
\end{definition}
\begin{definition}
For a simplicial complex $S\subset P(X)\setminus \{\emptyset\}$, we associate a chain complex $(C_{\ast}(S), \partial_{\ast})$ defined as follows:
\[
\begin{cases}
C_{n}(S) = \mathbb{Z}\langle \sigma \in S \mid  \# \sigma = n+1\rangle, \\
\partial_{n}\{s_{0}, \dots s_{n}\} := \sum_{i = 0}^{n} (-1)^{i}\{s_{0}, \dots, \hat{s}_{i}, \dots, s_{n}\},
\end{cases}
\]
where the index $s_{i}$ is a fixed total order on $X$, the notation $\hat{s}_{i}$ means the removal of the vertex $s_{i}$, and $\#$ denotes the cardinality of a set. For a subcomlex $S'$ of $S$, the associated chain complex $C_{\ast}(S')$ is obviously a subcomplex of $C_{\ast}(S)$. We define
\[
C_{\ast}(S, S') := C_{\ast}(S)/C_{\ast}(S').
\]
\end{definition}
We suppose that any chain complex $C_{\ast}$ has no negative component, that is, $C_{i} = 0$ for $i < 0$. For a chain complex $(C_{\ast}, \partial_{\ast})$, we denote by $C_{\ast+N}$ a chain complex $(D_{\ast}, \partial'_{\ast})$ defined as follows:
\begin{align*}
D_{i} = \begin{cases} C_{i+N} & (i\geq 0), \\ 0 & (i < 0), \end{cases} \\
\partial'_{i} = \begin{cases} \partial_{i+N} & (i\geq 0), \\ 0 & (i < 0). \end{cases}
\end{align*}
\subsection{Graphs}
\begin{definition}
For a simplicial complex $S$, an element $A \in S$ with $\# A = 1$ is called \textit{a vertex} of $S$, and an element $A \in S$ with $\# A = 2$ is called \textit{an edge} of $S$.
\end{definition}
\begin{definition}
\label{def2.0}
A \textit{graph} $G$ is a simplicial complex with $\# A \leq 2$ for every $A \in S$. We denote by $V(G)$ the set of vertices of $G$, and denote by $E(G)$ the set of edges of $G$, which are called \textit{a vertex set} and \textit{an edge set} respectively.
\end{definition}
\begin{definition}
\label{def2.03}
\begin{enumerate}
\item A graph $G$ is \textit{finite} if its vertex set $V(G)$ is a finite set.
\item A graph $G$ is \textit{connected} if any two vertices $a, b\in V(G)$ are connected, that is, there exists a finite sequence of edges 
\[
e_1, \dots, e_n \in E(G),
\] 
with 
\[
\begin{cases}a \in e_1, b \in e_n , \\ e_i \cap e_j \ne \emptyset.\end{cases}
\]
\item A \textit{cycle} in a graph $G$ is a finite sequence of edges
\[
e_{1}, \dots, e_{n}\in E(G),
\]
with 
\[
\begin{cases} e_1 \cap e_n \ne \emptyset, \\ e_i \ne e_j & (i \ne j).\end{cases}
\]
\item A graph is a \textit{tree} if it is finite, connected, and contains no cycles.
\end{enumerate}
\end{definition}
Throughout the paper, we assume that graphs are connected.
\begin{definition}
\label{def2.06}
For a graph $G$, we define a distance function $d : V(G)\times V(G)\longrightarrow\mathbb{Z}_{\ge 0}$ as follows:
\begin{itemize}
\item In case $a\ne b$, we define $d(a, b)=n$ where $n$ is the smallest integer such that there exist a sequence of edges
\[
e_1, \dots, e_n \in E(G),
\] 
with
\[
\begin{cases}a \in e_1, b \in e_n , \\ e_i \cap e_j \ne \emptyset.\end{cases}
\]
\item In case $a = b$, we define $d(a, b)=0$.
\end{itemize}
\end{definition}
\subsection{Magnitude homology of graphs}
\begin{definition}
For a graph $G$, we call a tuple $(x_{0}, \dots, x_{k})\in V(G)^{k+1}$ a $(k+1)$-{\it sequence} if it satisfies $x_j\ne x_{j+1}$ for every $0\leq  j \leq  k-1$.
\end{definition}
\begin{definition}
\label{def2.1}
Let $G$ be a graph and fix an integer $\ell\ge 0$. \textit{Magnitude chain complex of length} $\ell$ of $G$ is defined as follows. We denote it by $MC_{\ast,\ell}(G)$. 
The graded module $MC_{\ast,\ell}(G)$ is defined as the family of free $\mathbb{Z}$-modules $\{MC_{k, \ell}(G)\}_{k\ge 0}$ generated by all $(k+1)$-sequences
\[ \mathbf{x}=(x_0, \dots, x_k) \in V(G)^{k+1} \]
satisfying
\[
\sum_{i=0}^{k-1}d(x_i, x_{i+1}) = \ell.
\]
The boundary map is defined by $\partial = \textstyle\sum_{i=1}^{k-1} (-1)^i \partial_i$ with
\[
\partial_i(\mathbf{x}) := \begin{cases}
(x_0, \dots, \hat{x}_{i}, \dots, x_k) & \textrm{if }d(x_{i-1}, x_{i+1}) = d(x_{i-1}, x_i) + d(x_i, x_{i+1}),\\
0 & \textrm{otherwise,}
\end{cases}
\]
where the notation $\hat{x}_{i}$ means the removal of the vertex $x_i$.
\end{definition}
It has been proved that $\partial^2 = 0$ in \cite[Lemma 11]{Hep}.
\textit{Magnitude homology} $MH_{k, \ell}(G)$ of a graph $G$ is defined as the homology group $H_k(MC_{\ast, \ell})$.
For convenience, we define the \textit{length function} $\mathrm{L}$ by
\[ \mathrm{L}(\mathbf{x})
:=\textstyle\sum_{i=0}^{k-1}d(x_i, x_{i+1})
\]
for $\mathbf{x}\in V(G)^{k+1}$, and we call it the {\it length} of $\mathbf{x}$.
The condition $d(x_{i-1}, x_{i+1}) = d(x_{i-1}, x_i) + d(x_i, x_{i+1})$ in Definition \ref{def2.1} is equivalent to the condition
\[
\mathrm{L}(x_0, \dots ,\hat{x_i}, \dots, x_k)=\mathrm{L}(x_0, \dots, x_k).
\]
 
By definition, we have the following proposition.
\begin{prop}
\label{prop2.2}
For $\ell\ge 0$, we have the direct sum decomposition of a magnitude chain complex
\[ MC_{\ast, \ell}(G)
=\bigoplus_{a, b\in V(G)}
MC_{\ast, \ell}(a, b),
\]
where $MC_{\ast, \ell}(a, b)$ is the subcomplex of $MC_{\ast, \ell}(G)$ generated by sequences which start at $a$ and end at $b$.
\end{prop}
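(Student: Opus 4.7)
The plan is to verify the decomposition first at the level of generators, and then check that the boundary is compatible with it. Every $(k+1)$-sequence $\mathbf{x} = (x_0, \ldots, x_k)$ has uniquely determined endpoints $a = x_0$ and $b = x_k$, so the free basis of $MC_{k,\ell}(G)$ partitions into subsets indexed by pairs $(a,b) \in V(G)\times V(G)$. This immediately yields
\[
MC_{k,\ell}(G) = \bigoplus_{a,b \in V(G)} MC_{k,\ell}(a,b)
\]
as graded $\mathbb{Z}$-modules, where $MC_{k,\ell}(a,b)$ denotes the submodule spanned by those sequences whose first coordinate is $a$ and whose last coordinate is $b$.

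It remains to show that each $MC_{*,\ell}(a,b)$ is closed under $\partial$, so that it is a genuine subcomplex. For this I would just inspect the formula $\partial = \sum_{i=1}^{k-1} (-1)^i \partial_i$ from Definition \ref{def2.1}: each $\partial_i$ either returns zero or deletes the interior vertex $x_i$, and since the index $i$ ranges only over $1,\ldots,k-1$, the endpoints $x_0$ and $x_k$ are never affected. Hence every nonzero summand of $\partial(\mathbf{x})$ is a $k$-sequence that still starts at $a$ and ends at $b$, so $\partial \bigl( MC_{k,\ell}(a,b)\bigr) \subset MC_{k-1,\ell}(a,b)$.

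There is no real obstacle here; the proposition is essentially an unpacking of the definitions. The only mild point to mention is that the length grading requires no separate argument: we are already working inside a fixed $MC_{*,\ell}(G)$, which is known to be a chain complex, and the nonzero $\partial_i$ summands preserve the length function $\mathrm{L}$ by the reformulation of the $d(x_{i-1},x_{i+1}) = d(x_{i-1},x_i) + d(x_i,x_{i+1})$ condition recalled right after Definition \ref{def2.1}. For the extremal cases $k=0$ and $k=1$ the boundary is an empty sum and therefore trivially respects the decomposition.
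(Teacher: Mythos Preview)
Your argument is correct and is exactly the ``by definition'' verification the paper has in mind; the paper itself gives no proof beyond the phrase preceding the proposition. Your explicit check that $\partial$ only removes interior vertices, hence preserves endpoints, is precisely the content being appealed to.
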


Hence the computation of magnitude homology of a graph $G$ reduces to the computation of each $(a, b)$-component. We define a subsequence of a sequence as follows.

\begin{definition}\label{def2.3}

Let $\mathbf{x}=(x_0,  \dots , x_k)$ be a sequence, and $\mathbf{y}=(y_0, \dots , y_{k'})$ be a tuple. We call the tuple $\mathbf{y}$ a \textit{subsequence} of $\mathbf{x}$ if there exists integers $0 = i_0 < \dots < i_{k'} = k$ such that $x_{i_j} = y_j$ for each $0 \leq j \leq k'$.
When $\mathbf{y}$ is a subsequence of $\mathbf{x}$, we denote it by $\mathbf{y}\prec\mathbf{x}$.

\end{definition}
Note that a subsequence need not to be a sequence.
\begin{definition}\label{ind}

For a subsequence $\mathbf{y} = (x_0, x_{j_1}, \dots , x_{j_{k'}}, x_k)\prec\mathbf{x}$, we call a  set $\{j_1, \dots, j_{k'}\}$ \textit{the indices} of $\mathbf{y}\prec\mathbf{x}$.

\end{definition}

\section{Computation for trees}

In this section, we compute magnitude homology of a tree which is known in \cite[Corollary 31]{Hep} by using simplicial homology. 
\begin{definition}\label{def3.1}

Let $k\ge1$. We call a sequence $(x_0, x_1, \dots , x_k) \in V(G)^{k+1}$  \textit{a path} in a graph $G$ if it satisfies
\[ 
d(x_i, x_{i+1}) = 1,
\]
for every $0 \leq i \leq k-1$. For vertices $a, b\in V(G)$, we denote by $P_{\le \ell}(a, b)$ the set of  all paths $(x_0, \dots, x_k)$ in $G$ satisfying
\[
\begin{cases}
 x_0 = a, x_k=b \\
 \mathrm{L}(x_0, \dots, x_k)=k\le \ell.
 \end{cases}
 \]
\end{definition}

Note that, for each sequence $\mathbf{x} = (x_0,  \dots, x_k)$, there exists a shortest path which passes through vertices $x_0, \dots, x_k$ in this order.  We call such a shortest path \textit{a path of} $\mathbf{x}$. If $G$ is a tree, a path of $\mathbf{x}$ is unique for each sequence $\mathbf{x}$. Let $G$ be a tree. For a path $\mathbf{x} = (a, x_1, \dots, x_{k-1}, b) \in V(G)^{k+1}$, we denote by $MC_{k, \ell}(\mathbf{x})$ the submodule of $MC_{k, \ell}(a, b)$ generated by sequences whose paths coincide with $\mathbf{x}$. Clearly, $MC_{\ast,\ell}(\mathbf{x})$ is a subcomplex of $MC_{\ast, \ell}(a, b)$ since each $\partial_i(\mathbf{x})$ is 0 or  has $\mathbf{x}$ as its path.

\begin{prop}\label{prop3.2}

Let $G$ be a tree. We have the following direct sum decomposition
\[ MC_{\ast, \ell}(a, b)
=\bigoplus_{\mathbf{x} \in P_{\le \ell}(a, b)}
MC_{\ast, \ell}(\mathbf{x}), 
\]
for each $a, b\in V(G)$ and $\ell \ge 1$.
\end{prop}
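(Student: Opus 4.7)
My plan is to decompose the chain complex $MC_{\ast,\ell}(a,b)$ by assigning to each sequence-generator its uniquely determined path, and then verifying that the differential respects this grouping. In a tree, the geodesic between any two vertices is unique, so the shortest walk visiting $x_0,\dots,x_k$ in this order is obtained by concatenating the unique geodesics between consecutive entries; this walk is itself unique and has length $\sum_{i=0}^{k-1} d(x_i,x_{i+1})=\mathrm{L}(\mathbf{x})=\ell$, so it belongs to $P_{\le\ell}(a,b)$ (in fact to the subset of paths of length exactly $\ell$). This makes the assignment $\mathbf{x}\mapsto(\text{path of }\mathbf{x})$ well defined on generators.

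At the module level the decomposition is then automatic: the free abelian group $MC_{k,\ell}(a,b)$ has one generator for each sequence of length $\ell$ from $a$ to $b$, and these generators partition into disjoint families indexed by their common path. Paths of length strictly less than $\ell$ simply contribute trivial summands, which is why indexing over all of $P_{\le\ell}(a,b)$ rather than only over paths of length exactly $\ell$ causes no harm.

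It remains to check that each family $MC_{\ast,\ell}(\mathbf{x}')$ is stable under $\partial$, which is already remarked in the paragraph preceding the statement. Explicitly, if $\mathbf{y}$ has path $\mathbf{x}'$ and $\partial_i(\mathbf{y})\ne 0$, the condition $d(y_{i-1},y_{i+1})=d(y_{i-1},y_i)+d(y_i,y_{i+1})$ forces the unique geodesic from $y_{i-1}$ to $y_{i+1}$ to pass through $y_i$, so the concatenation of geodesics defining the path of $\partial_i(\mathbf{y})$ is exactly the walk $\mathbf{x}'$; thus each summand is a subcomplex and the module decomposition upgrades to a decomposition of chain complexes. There is no real obstacle here: the only subtlety is to make explicit that geodesic uniqueness in a tree simultaneously makes the path assignment well defined and makes the non-backtracking condition of Definition \ref{def2.1} the precise preservation condition for this assignment.
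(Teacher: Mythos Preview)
Your argument is correct and follows essentially the same route as the paper's proof: both use uniqueness of geodesics in a tree to assign each generator its unique path, and both verify that $\partial$ preserves this assignment by observing that when the length is preserved the path of the shortened sequence must agree with the original. Your version is somewhat more explicit about why the geodesic from $y_{i-1}$ to $y_{i+1}$ passes through $y_i$ under the equality condition, and you spell out why paths of length $<\ell$ contribute only trivial summands, but the underlying idea is identical.
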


\begin{proof}

Since we have seen that each sequence belongs to the unique component of the decomposition, it is sufficient to see that $\partial\mathbf{y}\in MC_{k-1, \ell}(\mathbf{x})$ for $\mathbf{y}\in MC_{k, \ell}(\mathbf{x})$.
Let $\mathbf{y} =(y_0, \dots, y_k)\in MC_{k, \ell}(\mathbf{x})$.
Then we have 
\[ \partial\mathbf{y}
=\sum_{\substack{\mathrm{L}(y_0, \dots,\hat{y_{i}},  \dots, y_k)
=\ell\\1 \le i \le k-1}}(-1)^i(y_0,  \dots,\hat{y_{i}},  \dots, y_k).
 \]
Since the path of each sequence $(y_0, \dots,\hat{y_{i}},  \dots, y_k)$ is unique, it must coincide with $\mathbf{x}$ if the length $\mathrm{L}(y_0,  \dots,\hat{y_{i}},  \dots, y_k)$ is preserved. Therefore we obtain that $\partial\mathbf{y}\in MC_{k-1, \ell}(\mathbf{x})$.
\end{proof}

In the following, we will construct a pair of simplicial complexes whose associated chain complex is isomorphic to  the magnitude chain complex $MC_{\ast, \ell}(\mathbf{x})$ for each $\ell\ge 3$ and for each path $\mathbf{x}$ in $G$. For a path $\mathbf{x} = (x_{0}, \dots x_{\ell})$, we consider a subsequence
$\varphi(\mathbf{x})=(x_0, x_{i_1}, \dots, x_{i_m}, x_\ell) \prec \mathbf{x}$ satisfying

\[ 
\begin{cases}
0<i_s<\ell,  \\
d(x_{i_s-1}, x_{i_s+1}) < d(x_{i_s-1}, x_{i_s}) + d(x_{i_s}, x_{i_s+1}),
\end{cases}
\]
for every $1 \leq s \leq m$. If $G$ is a tree, it turns out that $\varphi(\mathbf{x})$ consists of all  ``turning points" of $\mathbf{x}$ and end points by the following lemma.
\begin{lem}
\label{lem3.3}
Let $G$ be a tree,  and let $\mathbf{x} = (x_{0}, \dots x_{\ell})$ be a path in $G$. For every $1\leq i \leq \ell -1$, we have $(x_{0}, x_i, x_{\ell})\prec\varphi(\mathbf{x})$ if and only if $x_{i-1}=x_{i+1}$.
\end{lem}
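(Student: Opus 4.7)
The plan is to unpack the defining condition of $\varphi(\mathbf{x})$ at the vertex $x_i$, reduce the statement to a purely metric inequality in $G$, and then apply the tree hypothesis. By construction, the subsequence $(x_0, x_i, x_\ell)$ lies in $\varphi(\mathbf{x})$ precisely when $i$ appears among the indices $i_1, \dots, i_m$, that is, when
\[
d(x_{i-1}, x_{i+1}) < d(x_{i-1}, x_i) + d(x_i, x_{i+1}).
\]
Since $\mathbf{x}$ is a path, consecutive vertices are at distance one, so this inequality collapses to $d(x_{i-1}, x_{i+1}) < 2$, i.e.\ $d(x_{i-1}, x_{i+1}) \in \{0, 1\}$.

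Next I would handle the two implications separately. The reverse implication is immediate: if $x_{i-1} = x_{i+1}$ then $d(x_{i-1}, x_{i+1}) = 0 < 2$. For the forward direction, the case $d(x_{i-1}, x_{i+1}) = 0$ already gives $x_{i-1} = x_{i+1}$, so the only thing to rule out is $d(x_{i-1}, x_{i+1}) = 1$ with $x_{i-1} \ne x_{i+1}$. In that situation the three distinct edges $\{x_{i-1}, x_i\}$, $\{x_i, x_{i+1}\}$, $\{x_{i+1}, x_{i-1}\}$ form a cycle of length three in the sense of Definition \ref{def2.03}, contradicting the assumption that $G$ is a tree.

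There is no real obstacle here; the proof is essentially a direct calculation once one observes the ``no triangle'' consequence of being a tree. The only thing worth being careful about is verifying that three distinct, pairwise adjacent vertices genuinely yield a cycle under the paper's definition, and keeping the two directions straight.
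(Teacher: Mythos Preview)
Your proof is correct and follows essentially the same line as the paper: both reduce the condition $(x_0,x_i,x_\ell)\prec\varphi(\mathbf{x})$ to the strict triangle inequality $d(x_{i-1},x_{i+1})<2$ and then observe that for three consecutive points of a path in a tree the only possibilities are $d(x_{i-1},x_{i+1})=2$ or $x_{i-1}=x_{i+1}$. The paper handles this dichotomy by appeal to a figure, whereas you make the ``no triangle'' case explicit by exhibiting the $3$-cycle; the arguments are the same in substance.
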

\begin{proof}
Every consecutive three points of a path in a tree must have either of the configuration of Figure 1.
\begin{center}
\begin{figure}[H]
\includegraphics[width=12cm]{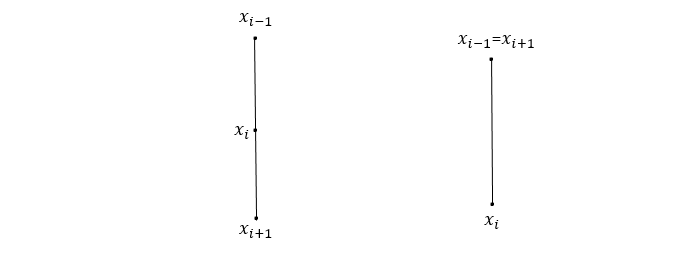}
\caption{Consecutive three points in a tree.}
\end{figure}
\end{center}
Thus we have $x_{i-1}=x_{i+1}$ if and only if the triangle inequality is not an equality.
\end{proof}

Let $\Delta^{\ell-2}$ be the standard $(\ell - 2)$- simplex $P(\{ 1, \dots,\ell-1 \}) \setminus {\emptyset}$. For a path $\mathbf{x} = (x_{0}, \dots x_{\ell})$ and its subsequence $\varphi(\mathbf{x})=(x_0, x_{i_1}, \dots, x_{i_m}, x_\ell)$, we define a subset $\Delta_{\mathbf{x}} \subset \Delta^{\ell - 2}$ by
\[ 
\Delta_{\mathbf{x}} = \{ \sigma \mid \{i_{1}, \dots, i_{m} \} \not \subset \sigma \in \Delta^{\ell - 2} \}.
 \]
For every $\sigma \in \Delta_{\mathbf{x}}$, any subset $\sigma' \subset \sigma$ is a simplex of  $\Delta_{\mathbf{x}}$, which implies that $\Delta_{\mathbf{x}}$ is a subcomplex of $\Delta^{\ell - 2}$.

\begin{prop}
\label{prop3.4}
There exists a chain isomorphism 
\[ 
(MC_{\ast + 2,\ell}(\mathbf{x}),\partial)\cong (C_{\ast}(\Delta^{\ell-2}, \Delta_{\mathbf{x}}),-\partial) .
\]
 
\end{prop}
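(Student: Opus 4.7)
My plan is to construct an explicit generator-wise isomorphism $\psi$ that records, for each sequence, the positions along the path $\mathbf{x}=(x_0,\ldots,x_\ell)$ at which its vertices occur. Given a $(k+1)$-sequence $\mathbf{y}=(y_0,\ldots,y_k)$ in $MC_{k,\ell}(\mathbf{x})$, the fact that the unique path of $\mathbf{y}$ is $\mathbf{x}$ forces $y_0=x_0$ and $y_k=x_\ell$, and produces a strictly increasing sequence of indices $0=t_0<t_1<\cdots<t_k=\ell$ with $y_s=x_{t_s}$ (uniqueness of the $t_s$ is justified below). I set
\[
\psi(\mathbf{y}):=\{t_1,\ldots,t_{k-1}\}\in\Delta^{\ell-2},
\]
a simplex of dimension $k-2$ matching the advertised shift $MC_{k,\ell}\leftrightarrow C_{k-2}$.

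The core combinatorial step is to identify the generators of $MC_{k,\ell}(\mathbf{x})$ with the subsets of $\{1,\ldots,\ell-1\}$ of cardinality $k-1$ that contain every turning index $\{i_1,\ldots,i_m\}$ of $\mathbf{x}$---exactly the generators of $C_{k-2}(\Delta^{\ell-2},\Delta_{\mathbf{x}})$. For a subset $\{t_1<\cdots<t_{k-1}\}$, the total length of the associated tuple satisfies $\sum_s d(x_{t_{s-1}},x_{t_s})\le\sum_s(t_s-t_{s-1})=\ell$, with equality if and only if each sub-walk of $\mathbf{x}$ from $x_{t_{s-1}}$ to $x_{t_s}$ is a geodesic. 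In a tree, Lemma \ref{lem3.3} identifies the failure of this geodesic condition with the presence of a turning index of $\mathbf{x}$ inside $(t_{s-1},t_s)$; summing over $s$, the length equals $\ell$ if and only if $\{i_1,\ldots,i_m\}\subseteq\{t_1,\ldots,t_{k-1}\}$. Uniqueness of the $t_s$ attached to a given $\mathbf{y}$ follows by the same observation, since a geodesic sub-walk in a tree cannot revisit a vertex, so two distinct choices would contradict geodesicity of some sub-walk.

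Finally, I verify the twisted chain-map identity $\psi\circ\partial_{MC}=-\partial_C\circ\psi$. Removing $y_s$ from $\mathbf{y}$ corresponds to removing $t_s$ from $\psi(\mathbf{y})$, and the vanishing criteria coincide: the condition $d(y_{s-1},y_{s+1})=d(y_{s-1},y_s)+d(y_s,y_{s+1})$ translates, via the geodesic computation above, into the sub-walk of $\mathbf{x}$ from $x_{t_{s-1}}$ to $x_{t_{s+1}}$ being a geodesic, which holds iff $t_s$ is not a turning index, iff $\psi(\mathbf{y})\setminus\{t_s\}$ still contains every turning index and so is nonzero in $C_{k-3}(\Delta^{\ell-2},\Delta_{\mathbf{x}})$. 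The sign reversal has a single source: $\partial_{MC}$ weights the face omitting $y_s$ by $(-1)^s$, whereas $\partial_C$ weights the face omitting the $s$-th smallest element of $\psi(\mathbf{y})$ by $(-1)^{s-1}$. The main obstacle I anticipate is not any individual step but the combinatorial bookkeeping of the bijection: the tree hypothesis, Lemma \ref{lem3.3}, and uniqueness of geodesics in trees must all be orchestrated to convert the numerical condition ``length $=\ell$'' into the set-theoretic condition ``contains every turning index.''
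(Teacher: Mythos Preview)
Your proof is correct and follows essentially the same approach as the paper: both construct the isomorphism by sending a sequence $\mathbf{y}\in MC_{*+2,\ell}(\mathbf{x})$ to its set of indices $\{t_1,\ldots,t_{k-1}\}\subset\{1,\ldots,\ell-1\}$ as a subsequence of $\mathbf{x}$, and then verify bijectivity and the chain-map identity (with the expected sign shift). Your treatment is slightly more explicit than the paper's in unpacking the equivalence ``length $=\ell$'' $\Leftrightarrow$ ``contains every turning index'' via geodesicity of sub-walks, and in justifying uniqueness of the indices $t_s$, but the argument is the same in substance.
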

\begin{proof}
Note that every sequence $\mathbf{y}$ belonging to $MC_{\ast + 2,\ell}(\mathbf{x})$ is a subsequence of $\mathbf{x}$, that is
\[
\mathbf{y} = (x_0,x_{j_1}, \dots, x_{j_{k}}, x_\ell) \prec \mathbf{x}.
\]
We define a homomorphism
\[ 
t : MC_{\ast + 2,\ell}(\mathbf{x})\longrightarrow C_{\ast}(\Delta^{\ell-2}, \Delta_{\mathbf{x}})
 \]
by sending each sequence to its indices (Definition \ref{ind})
\[ 
(x_0,x_{j_1}, \dots, x_{j_{k}}, x_\ell)\longmapsto \{j_1, \dots, j_{k}\}
 \]
and extending it linearly. We can easily see that this is well-defined by the definitions. We show that homomorphism $t$ is bijective and is a chain map. First we show the injectivity.
Suppose that we have
\[
t\left(\sum_{\alpha=1}^N c_\alpha(x_0,x_{j_1^\alpha}, \dots,x_{j_{k}^\alpha}, x_{\ell})\right)=0.
\]
In the case that 
\[
\{j_1^\alpha, \dots, j_{k}^\alpha\} = \{j_1^{\alpha'}, \dots, j_{k}^{\alpha'}\}
\]
for every $1\leq \alpha, \alpha' \leq N$, it is clear that 
\[
\sum_{\alpha=1}^N c_\alpha(x_0, x_{j_1^\alpha}, \dots, x_{j_{k}^\alpha}, x_{\ell})=0.
\]
 In general, the index set $\{1, \dots, N\}$ can be decomposed into pairwise disjoint subsets $A_{0}, \dots, A_{M}$ such that $\alpha, \alpha' \in A_{m}$ implies that 
 \[
 \{j_1^\alpha, \dots, j_{k}^\alpha\} = \{j_1^{\alpha'}, \dots, j_{k}^{\alpha'}\},
 \]
 for every $0 \leq m \leq M$. Hence we obtain
\begin{align*}
\sum_{\alpha=1}^N c_\alpha(x_0, x_{j_1^\alpha}, \dots, x_{j_{k}^\alpha}, x_{\ell}) = \sum_{m = 0}^{M}\sum_{\alpha \in A_{m}} c_\alpha(x_0, x_{j_1^\alpha}, \dots, x_{j_{k}^\alpha}, x_{\ell}) = 0,
\end{align*}
which implies that $t$ is injective. Next we show the surjectivity. By definition,  $[\{j_1, \dots, j_{k}\}] \ne 0 \in  C_{\ast }(\Delta^{\ell-2}, \Delta_{\mathbf{x}})$ implies that
\[
\varphi(\mathbf{x}) \prec (x_0, x_{j_1}, \dots, x_{j_{k}}, x_\ell).
\]
By the definition of $\Delta_{\mathbf{x}}$, the paths of $\varphi(\mathbf{x})$ and $(x_0, x_{j_1}, \dots, x_{j_{k}}, x_\ell)$ coincide, which implies that 
\[
(x_0,x_{j_1},\dots, x_{j_{k}}, x_\ell) \in MC_{k,\ell}(\mathbf{x}).
\]
Hence we see that $t$ is surjective. Finally, the following calculation shows that $t$ is a chain map :
\begin{align*}
&\partial t\left(\sum_{\alpha}c^\alpha(x_0, x_{j_1}^{\alpha}, \dots, x_{j_{k}}^{\alpha}, x_\ell)\right)\\
&= \partial \left(\sum_{\alpha}c^\alpha \{j_{1}^\alpha,  \dots, j_{k}^\alpha\}\right)\\
&= \sum_{\alpha}c^\alpha \sum_{\substack{0 < i \leq k \\ \{i_1, \dots, i_m\}\subset\{j_1^\alpha,   \dots,\hat{j_{i}}^{\alpha}, \dots, j_{k}^\alpha\}}}
(-1)^{i-1}\{j_1^\alpha, \dots,\hat{j_{i}}^{\alpha}, \dots, j_{k}^\alpha\}\\
&= \sum_{\alpha}c^\alpha \sum_{\substack{0 < i \leq k \\ 
d(x^{\alpha}_{j_{i}-1}, x^{\alpha}_{j_{i}}) + d(x_{j_{i}}^{\alpha}, x^{\alpha}_{j_{i}+1})=\\
d(x^{\alpha}_{j_{i}-1}, x^{\alpha}_{j_{i}+1})}}
(-1)^{i-1}\{j_{1}^\alpha, \dots, \hat{j_{i}}^{\alpha}, \dots, j_{k}^\alpha\}\\
& = t\left( -\partial \sum_{\alpha}c^\alpha(x_0, x_{j_1}^{\alpha}, \dots, x_{j_{k}}^\alpha  ,x_\ell )\right).
\end{align*}
\end{proof}
To compute the homology of $C_{\ast}(\Delta^{\ell-2}, \Delta_{\mathbf{x}})$, we use the homology exact sequence
\[ 
\cdots\longrightarrow H_k(\Delta^{\ell-2}) \longrightarrow H_k(\Delta^{\ell-2}, \Delta_{\mathbf{x}}) \longrightarrow H_{k-1}(\Delta_{\mathbf{x}}) \longrightarrow H_{k-1}(\Delta^{\ell-2})\longrightarrow\cdots.
 \]
Since $\Delta^{\ell-2}$ is contractible, we have $H_k(\Delta^{\ell-2},\Delta_{\mathbf{x}})\cong H_{k-1}(\Delta_{\mathbf{x}})$ while $k>1$.

Now we determine the homotopy type of $\Delta_{\mathbf{x}}$. When $m=0$, we have $\Delta_{\mathbf{x}}=\emptyset$. When $m=\ell-1$, we can see that $\Delta_{\mathbf{x}}$ is homotopy equivalent to the sphere $S^{(\ell-3)}$.
The following proposition shows that it is contractible in the other cases.
\begin{prop}\label{prop3.5}

For every path $\mathbf{x}$, the complex $\Delta_{\mathbf{x}}$ is contractible when $0 < m<\ell-1$ where $\varphi(\mathbf{x})=(x_0, x_{i_1}, \dots, x_{i_m}, x_\ell)$.
\end{prop}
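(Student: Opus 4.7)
The plan is to exhibit $\Delta_{\mathbf{x}}$ as a simplicial cone, which is automatically contractible. The key observation is that the hypothesis $m < \ell-1$ means the forbidden face $\{i_1, \dots, i_m\}$ does not exhaust the vertex set $\{1, \dots, \ell-1\}$ of $\Delta^{\ell-2}$, so some vertex lies outside $\{i_1, \dots, i_m\}$.

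Concretely, I would begin by choosing a vertex $j \in \{1, \dots, \ell-1\}\setminus\{i_1, \dots, i_m\}$, which exists precisely because $m < \ell-1$. Since $m \geq 1$, the singleton $\{j\}$ certainly does not contain $\{i_1, \dots, i_m\}$, so $\{j\} \in \Delta_{\mathbf{x}}$. The main claim is then that for every $\sigma \in \Delta_{\mathbf{x}}$, the set $\sigma \cup \{j\}$ also lies in $\Delta_{\mathbf{x}}$. This follows from a short set-theoretic check: because $j \notin \{i_1, \dots, i_m\}$, the inclusion $\{i_1, \dots, i_m\} \subset \sigma \cup \{j\}$ is equivalent to $\{i_1, \dots, i_m\} \subset \sigma$, and the latter fails by hypothesis on $\sigma$.

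This shows that $\Delta_{\mathbf{x}}$ is closed under taking unions with $\{j\}$, i.e., $\Delta_{\mathbf{x}}$ is the simplicial cone with apex $j$ over the subcomplex consisting of simplices not containing $j$. A simplicial cone is contractible via the straight-line homotopy to the apex (or equivalently, the chain-level chain homotopy given by $\sigma \mapsto \sigma \cup \{j\}$ on the simplicial chain complex), so $\Delta_{\mathbf{x}}$ is contractible, completing the proof.

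There is no real obstacle here beyond ensuring the choice of $j$ is legitimate; the two boundary cases $m = 0$ and $m = \ell-1$ were already handled in the remarks immediately preceding the proposition and are exactly the cases excluded from the hypothesis, so no additional argument is required.
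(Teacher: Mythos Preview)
Your argument is correct. The paper's proof identifies the maximal faces of $\Delta_{\mathbf{x}}$ as the simplices $\{1,\dots,\hat{i}_j,\dots,\ell-1\}$ for $1 \le j \le m$ and observes that they all contain the common face $\{1,\dots,\ell-1\}\setminus\{i_1,\dots,i_m\}$, so the whole complex deformation retracts onto that common simplex. Your approach is the same observation distilled to a single vertex: rather than retracting onto the entire complement of $\{i_1,\dots,i_m\}$, you pick just one vertex $j$ of that complement and note that $\Delta_{\mathbf{x}}$ is a cone with apex $j$. This is a cleaner packaging of the same idea, and it sidesteps the slightly awkward bookkeeping of enumerating maximal faces; the paper's version, on the other hand, makes the global structure (a union of $(\ell-3)$-simplices glued along a common face) more visible, which is what their Figure~2 is illustrating.
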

\begin{proof}
The complex $\Delta_{\mathbf{x}}$ can be obtained from maximal faces $\{1, \dots,\hat{i}_{j}, \dots,\ell-1\}$ for $1 \leq j \leq m$ by glueing them at the common face $\{m+1,m+2, \dots,\ell-1\}$ (bold parts in case $m=2,3$ of Figure 2). Since each maximal face is a deformation retract of the  contractible common face, the whole complex is also contractible.
\begin{center}
\begin{figure}[h]
\includegraphics[width=12cm]{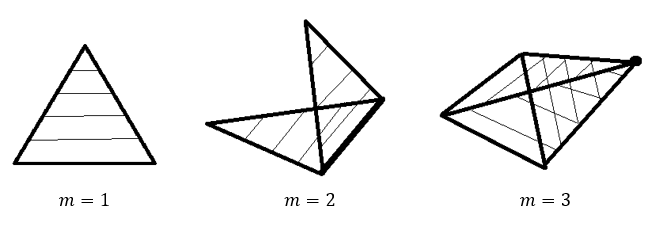}
\caption{Case $l=5$.}
\end{figure}
\end{center}
\end{proof}
Now we can completely compute magnitude homology of trees, which has been obtained by several authors.
\begin{thm}
\label{thm3.6}
Let $G$ be a tree, and let $k, \ell \geq 3$ be integers.
Then we have
\[
MH_{k,\ell}(G)=\begin{cases}
\mathbb{Z}^{2\# E(G)}, & k=\ell,\\
0, & k\ne\ell,
\end{cases}
 \]
where $\# E(G)$ denotes the cardinality of the edge set $E(G)$.
\end{thm}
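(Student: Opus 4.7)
The plan is to combine the three decompositions already in hand. By Proposition 2.2 and Proposition 3.2,
\[
MC_{*, \ell}(G) \cong \bigoplus_{a, b \in V(G)} \bigoplus_{\mathbf{x} \in P_{\le \ell}(a, b)} MC_{*, \ell}(\mathbf{x}),
\]
and Proposition 3.4 identifies each summand with $C_{*-2}(\Delta^{\ell-2}, \Delta_{\mathbf{x}})$ up to sign. So it suffices to compute $H_{k-2}(\Delta^{\ell-2}, \Delta_{\mathbf{x}})$ for every path $\mathbf{x}$ of length $\ell$, and then count the paths that survive.

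Since $\Delta^{\ell-2}$ is contractible, the long exact sequence of the pair (in reduced homology) collapses to
\[
H_{k-2}(\Delta^{\ell-2}, \Delta_{\mathbf{x}}) \cong \tilde{H}_{k-3}(\Delta_{\mathbf{x}})
\]
for every $k \ge 3$. Now I invoke the trichotomy recorded just before and inside Proposition 3.5: if $m = 0$ then $\Delta_{\mathbf{x}} = \emptyset$ and $\tilde{H}_{k-3}$ vanishes for $k \ge 3$; if $0 < m < \ell - 1$ then $\Delta_{\mathbf{x}}$ is contractible, so its reduced homology vanishes; and if $m = \ell - 1$ then $\Delta_{\mathbf{x}} \simeq S^{\ell-3}$, so $\tilde{H}_{k-3}(\Delta_{\mathbf{x}}) \cong \mathbb{Z}$ precisely when $k = \ell$, and vanishes otherwise. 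Thus only the paths with $m = \ell - 1$ contribute to $MH_{k, \ell}(G)$, and each such path contributes a single copy of $\mathbb{Z}$ in degree $k = \ell$.

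It remains to count the paths $\mathbf{x}$ of length $\ell$ with $m = \ell - 1$. By Lemma 3.3, this condition forces $x_{i-1} = x_{i+1}$ for every $1 \le i \le \ell - 1$, so $\mathbf{x}$ alternates between two fixed adjacent vertices: $\mathbf{x} = (u, v, u, v, \dots)$ for some edge $\{u, v\} \in E(G)$. Each edge therefore contributes exactly two such paths, distinguished by the starting vertex, and summing over all endpoint pairs $(a, b)$ absorbs this choice. This gives a total of $2\,\# E(G)$ surviving summands, each isomorphic to $\mathbb{Z}$ in degree $k = \ell$ and zero otherwise, which yields the claimed formula. The only subtlety is the uniform treatment of the boundary case $k = 3$, which is handled by passing to the reduced long exact sequence rather than analyzing it separately; apart from that, the proof is essentially a bookkeeping exercise over the three geometric cases for $\Delta_{\mathbf{x}}$.
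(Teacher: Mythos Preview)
Your proof is correct and follows essentially the same route as the paper: decompose via Propositions~2.9 and~3.2, translate each path-summand to the relative simplicial pair via Proposition~3.4, use contractibility of $\Delta^{\ell-2}$ together with the trichotomy on $m$ (including Proposition~3.5), and finally count the alternating paths using Lemma~3.3. Your explicit passage to the \emph{reduced} long exact sequence is in fact slightly more careful than the paper's own account, which writes the unreduced isomorphism $H_k(\Delta^{\ell-2},\Delta_{\mathbf{x}})\cong H_{k-1}(\Delta_{\mathbf{x}})$ and then quotes $H_{\ell-3}(S^{\ell-3})$; for $\ell=3$ that literally gives $H_0(S^0)=\mathbb{Z}^2$ rather than $\mathbb{Z}$, whereas your reduced version $\tilde H_{\ell-3}(S^{\ell-3})$ handles this boundary case cleanly.
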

\begin{proof}
By Proposition \ref{prop2.2} and \ref{prop3.2}, we have
\[ 
MH_{k,\ell}(G)=\bigoplus_{a,b\in V(G)}\bigoplus_{\mathbf{x}\in P_{\le\ell}(a, b)}MH_{k,\ell}(\mathbf{x}).
 \]
If $\mathbf{x}$ is a path such that $\varphi(\mathbf{x})$ has less than $(\ell+1)$ points,
then the $\mathbf{x}$-component $MH_{k,\ell}(\mathbf{x})$ is trivial by Proposition \ref{prop3.4} and \ref{prop3.5}.
In case $\varphi(\mathbf{x})$ has $(\ell+1)$ points,  we have that
\[ 
x_0=x_2=x_4= \dots\textrm{ and }x_1=x_3=x_5= \dots
 \]
by Lemma \ref{lem3.3}. We can easily see that the amount of such paths is two times the cardinality of the edge set $E(G)$.  Then by Proposition \ref{prop3.4}, we have
\begin{align*}
MH_{\ell,\ell}(G) &=  \bigoplus_{\{x_0, x_1\}\in E(G)} MH_{\ell,\ell}(x_0, x_1, x_0, \dots)\oplus MH_{\ell,\ell}(x_1, x_0, x_1, \dots) \\
&\cong \bigoplus_{\{x_0, x_1\}\in E(G)} H_{\ell-3}(S^{(\ell-3)}) \oplus H_{\ell-3}(S^{(\ell-3)})\\
&\cong  \mathbb{Z}^{2\# E(G)}.
\end{align*}
\end{proof}

\section{Theory for general graphs}
For general graphs, we cannot decompose magnitude homology indexed by paths as in the case of trees, since sequences may have more than one shortest paths. 
Hence we develop a method to compute each $(a, b)$-component in a similar way as in the tree case.
Let $G$ be a connected graph and  let $a, b\in V(G)$. We fix an integer $\ell \ge 3$.
\begin{definition}
\label{def4.0}
Let $K_\ell(a, b)$ be the set whose elements are subsets
\[
\{(x_{i_1},i_1), \dots, (x_{i_k},i_{k})\}\subset V(G)\times\{1,2, \dots,\ell-1\}
\]
such that there exists a path
\[
(a, x_1, \dots, x_{\ell'-1}, b)\in P_{\le \ell}(a, b)
\]
with  $(a, x_{i_1}, \dots, x_{i_{k}}, b)\prec (a, x_1, \dots, x_{\ell'-1}, b)$.
\end{definition}
For simplicity, we abbreviate  $\{(x_{i_1},i_1), \dots, (x_{i_k},i_k)\}$ to $\{x_{i_1}, \dots, x_{i_k}\}$ if there is no confusion. The set $K_\ell(a, b)$ is a simplicial complex since
\[
\{x_{j_1}, \dots, x_{j_{k'}}\} \subset \{x_{i_1}, \dots, x_{i_{k}}\}\in K_l(a, b)
\] 
implies that there exists  a path $(a, x_1, \dots, x_{\ell'-1}, b)\in P_{\le \ell}(a, b)$ with 
\[
(a, x_{j_1}, \dots, x_{j_{k'}}, b) \prec (a, x_{i_1}, \dots, x_{i_{k}}, b)\prec (a, x_1,  \dots, x_{\ell'-1}, b), 
 \]
that is  
\[
\{x_{j_1},  \dots, x_{j_{k'}}\} \in K_\ell(a, b).
\]
Clearly, the complex $K_{\ell-1}(a, b)$ is a subcomplex of $K_\ell(a, b)$.  We also define a subcomplex $K'_\ell(a, b)\subset K_\ell(a, b)$ by
\[
K'_\ell(a, b):=
\{\{x_{i_1}, \dots, x_{i_k}\}\in K_\ell(a, b)\mid \mathrm{L}(a, x_{i_1}, \dots, x_{i_k}, b)\le \ell-1\}.
\]
Our goal is to construct an isomorphism between $MC_{\ast,\ell}(a, b)$ and the quotient chain complex $C_{\ast - 2}(K_\ell(a, b), K'_{\ell}(a, b))$. We assume that $d(a, b)\le \ell$, since we have $K_\ell(a, b)=\emptyset$ for $d(a, b)>\ell$.
\begin{lem}\label{lem4.1}

Let $\{x_{i_1}, \dots, x_{i_k}\}$ and $\{y_{j_1}, \dots, y_{j_k}\}$ be simplices of $K_\ell(a, b)$. 
If we have
\[
\mathrm{L}(a, x_{i_1}, \dots, x_{i_k},b)=\mathrm{L}(a, y_{j_1}, \dots, y_{j_k},b)=\ell
\]
and $x_{i_s}=y_{j_s}$ for $1\leq s \leq   k$, then we have 
\[ 
i_s=j_s, 
\]
for $1\leq s \leq k$.
\end{lem}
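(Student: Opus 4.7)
The plan is to exploit the interplay between a witness-path in $P_{\le\ell}(a,b)$ and the length-saturation hypothesis $\mathrm{L}=\ell$. First I would unfold Definition \ref{def4.0}: there exist paths
\[
p=(a,x_1,\ldots,x_{\ell_1-1},b), \qquad q=(a,y_1,\ldots,y_{\ell_2-1},b)
\]
of lengths $\ell_1,\ell_2\le\ell$ along which the distinguished vertices sit at positions $i_1<\cdots<i_k$ and $j_1<\cdots<j_k$ respectively. For notational convenience I would set $i_0=j_0=0$, $i_{k+1}=\ell_1$, and $j_{k+1}=\ell_2$.

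Since consecutive vertices on a path are adjacent, iterating the triangle inequality yields
\[
d(x_{i_s},x_{i_{s+1}})\le i_{s+1}-i_s
\]
for each $0\le s\le k$. Summing these bounds and invoking the length hypothesis,
\[
\ell \;=\; \mathrm{L}(a,x_{i_1},\ldots,x_{i_k},b) \;\le\; \sum_{s=0}^{k}(i_{s+1}-i_s) \;=\; \ell_1 \;\le\; \ell,
\]
so every inequality must be an equality. In particular $d(x_{i_s},x_{i_{s+1}})=i_{s+1}-i_s$ for all $s$, and the identical argument applied to $q$ gives $d(y_{j_s},y_{j_{s+1}})=j_{s+1}-j_s$. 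The hypothesis $x_{i_s}=y_{j_s}$ makes the two left-hand sides coincide, so $i_{s+1}-i_s=j_{s+1}-j_s$ for every $s$; starting from $i_0=j_0=0$ this yields $i_s=j_s$ by induction on $s$.

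The only real care needed lies in the book-keeping: recall that elements of $K_\ell(a,b)$ are actually pairs $(v,t)\in V(G)\times\{1,\ldots,\ell-1\}$, so the equality $x_{i_s}=y_{j_s}$ in the hypothesis is equality of the \emph{vertex} component, while the conclusion $i_s=j_s$ is equality of the \emph{position} component. The content of the lemma is therefore that when $\mathrm{L}$ is maximal no slack remains in any intermediate segment, and the position of each vertex is forced by its graph-distance from $a$. I do not foresee a hard step beyond setting this telescoping estimate up carefully.
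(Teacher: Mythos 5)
Your proof is correct and follows essentially the same route as the paper: the paper's proof asserts directly that $i_s=\sum_{n=0}^{s-1}d(x_{i_n},x_{i_{n+1}})$ and concludes, which is exactly the output of your telescoping/saturation argument. Your version is in fact slightly more careful, since you justify why the witness path must have length exactly $\ell$ and why no segment can have slack, steps the paper leaves implicit.
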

\begin{proof}
By definition, there is a path
\[
(a, x_1, \dots, x_{\ell-1}, b)\in P_{\le \ell}(a.b) 
 \]
with 
\[
(a, x_{i_1}, \dots, x_{i_k}, b) \prec (a, x_1, \dots, x_{\ell-1}, b).
 \]
For $1 \leq s \leq k$, we have
\[
i_s = i_{s-1} + d(x_{i_{s-1}}, x_{i_s})=\sum_{n=0}^{s-1} d(x_{i_n}, x_{i_{n+1}}).
\]
Similarly we also have 
\[
j_s= \sum_{n=0}^{s-1}d(y_{j_n}, y_{j_{n+1}}),
\]
for $1 \leq s \leq k$.
Since we have $x_{i_s} = y_{j_s}$ for $1 \leq s \leq k$, we obtain $i_s=j_s$.
\end{proof}
Next we define a homomorphism
\[
t : (C_\ast(K_\ell(a, b), K'_{\ell}(a, b)), -\partial )\longrightarrow (MC_{\ast+2,\ell}(a, b),\partial )
 \]
by
\[
[\{x_{i_1}, \dots, x_{i_{k}}\}] \longmapsto (a, x_{i_1}, \dots, x_{i_{k}}, b).
 \]
It is well-defined since $\partial_i (a, x_{i_1}, \dots, x_{i_{k}}, b)$ vanishes exactly when $\partial_i$ shortens the length of the sequence $(a, x_{i_1}, \dots, x_{i_{k}},b)$, which is equivalent to saying that $\partial_i \{x_{i_1}, \dots, x_{i_{k}}\} \in C_{\ast}(K'_{\ell}(a, b))$. 
\begin{thm}
\label{thm4.2}
For $\ell\ge 3$, the above homomorphism 
\[
t:(C_\ast(K_\ell(a, b), K'_{\ell}(a, b)),-\partial )\longrightarrow (MC_{\ast+2,\ell}(a, b),\partial )
 \]
  is a chain map. Furthermore, it is an isomorphism for $\ast\ge 0$.
\end{thm}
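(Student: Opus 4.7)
The plan is to verify directly that $t$ respects boundaries and then identify the bases on the two sides so that $t$ becomes a bijection on basis generators.

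First I would check that $t$ is a chain map by expanding both $t(-\partial \sigma)$ and $\partial\, t(\sigma)$ on a basis simplex $\sigma = \{(x_{i_1},i_1), \dots, (x_{i_{k+1}}, i_{k+1})\}$ of $K_\ell(a, b)$ of length $\ell$, taken with $i_1 < \dots < i_{k+1}$. The simplicial formula gives $-\partial\sigma = \sum_{j=1}^{k+1}(-1)^j \{(x_{i_1},i_1),\dots,\widehat{(x_{i_j},i_j)},\dots,(x_{i_{k+1}},i_{k+1})\}$ after reindexing, whose image under $t$ is an alternating sum of magnitude generators in which any face whose length drops below $\ell$ is zero, because such a face represents a simplex of $K'_\ell(a, b)$. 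On the magnitude side, $\partial(a, x_{i_1}, \dots, x_{i_{k+1}}, b) = \sum_{j=1}^{k+1} (-1)^j \partial_j(\cdot)$ kills the very same summands by the definition of $\partial_j$. This mirrors the calculation performed in the proof of Proposition \ref{prop3.4}.

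Next, I would identify the bases. Since $K'_\ell(a, b)$ is spanned by the simplices with $\mathrm{L} \le \ell - 1$, a basis of the quotient $C_k(K_\ell(a, b), K'_\ell(a, b))$ is given by the $k$-simplices of $K_\ell(a, b)$ with $\mathrm{L}(a, x_{i_1}, \dots, x_{i_{k+1}}, b) = \ell$, while $MC_{k+2, \ell}(a, b)$ is free on the sequences $(a, y_1, \dots, y_{k+1}, b)$ of length $\ell$. The map $t$ sends the former basis into the latter by forgetting the second coordinates.

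Injectivity then follows from Lemma \ref{lem4.1}: any length-$\ell$ simplex whose vertex sequence is $(x_{i_1}, \dots, x_{i_{k+1}})$ must have labels equal to the partial sums $i_s = \sum_{n=0}^{s-1} d(x_{i_n}, x_{i_{n+1}})$ with $x_{i_0} = a$, so distinct basis simplices map to distinct magnitude generators. For surjectivity, given a sequence $(a, y_1, \dots, y_{k+1}, b)$ of length $\ell$, I would concatenate shortest paths between consecutive entries to produce a member of $P_{\le \ell}(a, b)$ of length exactly $\ell$ in which $y_s$ occupies position $i_s$; this certifies that $\{(y_1, i_1), \dots, (y_{k+1}, i_{k+1})\}$ is a simplex of $K_\ell(a, b)$ of length $\ell$ that $t$ sends to the given sequence. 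The main obstacle I expect is the sign-and-case bookkeeping in the chain-map step, in particular matching the vanishing of $\partial_j$ on magnitude chains with membership of the corresponding boundary face in $K'_\ell(a, b)$; once this is handled, the rest is a clean translation between the two combinatorial descriptions made possible by Lemma \ref{lem4.1}.
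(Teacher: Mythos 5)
Your proposal is correct and follows essentially the same route as the paper: the chain-map check mirrors the computation of Proposition \ref{prop3.4}, injectivity comes from Lemma \ref{lem4.1} forcing the labels $i_s$ to be the partial sums of distances, and surjectivity comes from concatenating shortest paths to realize any length-$\ell$ sequence as a subsequence of a path in $P_{\le \ell}(a,b)$. Your framing of the argument as a bijection between the evident bases of $C_\ast(K_\ell(a,b),K'_\ell(a,b))$ and $MC_{\ast+2,\ell}(a,b)$ is a clean, equivalent packaging of what the paper does.
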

\begin{proof}
We can show that the homomorphism $t$ is a chain map by the same computation as in the proof of Proposition \ref{prop3.4}. Next we show that $t$ is  injective. Suppose that 
\[
t\left(\sum_{\alpha=1}^N c_\alpha [\{x_{j_1}^{\alpha}, \dots, x_{j_{k}}^{\alpha}\}] \right)=0.
\]
We can assume that 
\[
[(x_0, x_{j_1}^{\alpha}, \dots, x_{j_{k}}^{\alpha}, x_l)] = [(x_0, x_{j_1}^{\alpha'} \dots, x_{j_{k}}^{\alpha'},x_l)] \ne 0
\] 
for any $1 \leq \alpha, \alpha' \leq N$ as in the proof of Proposition \ref{prop3.4}.  By Lemma \ref{lem4.1}, it turns out that their indices coincide.
Hence we have
\[
\sum_{\alpha=1}^N c_\alpha\{x_{j_1}^{\alpha}, \dots, x_{j_{k}}^{\alpha}\}=0,
\]
which implies that $t$ is injective. To prove surjectivity, let $(a, x_{i_1}, \dots, x_{i_{k+1}}, b)$ be a base element of $MC_{k+2, l}(a, b)$. 
Since we have 
\[
\mathrm{L}(a, x_{i_1}, \dots, x_{i_{k+1}}, b)=\ell,
\] 
there exist a path 
\[
(a, x_1,\dots, x_{\ell-1},b)\in P_{\le \ell}(a, b)
\]
such that
\[
(a, x_{i_1}, \dots, x_{i_{k+1}}, b) \prec (a, x_1, \dots, x_{\ell-1}, b).
\]
Hence we obtain 
\[
\{x_{i_1}, \dots, x_{i_{k+1}}\} \in K_\ell(a, b),
\]
and we also have
\[
\{x_{i_1}, \dots, x_{i_{k+1}}\}\not\in K'_{\ell}(a, b).
\]
Therefore $t$ is an isomorphism on the module of each dimension.
\end{proof}
By Theorem \ref{thm4.2}, we can compute most part of magnitude homology for arbitrary graphs as follows.
\begin{cor}
\label{cor4.3}
We have
\[
MH_{k, \ell}(a, b)\cong H_{k-2}(K_\ell(a, b), K'_\ell(a, b))
\]
for $k,\ell\ge3$. Moreover, for $k = 2$, we also have
\[
MH_{2, \ell}(a, b) \cong \begin{cases}H_{0}(K_\ell(a, b), K'_\ell(a, b)) & {\rm if}\ d(a, b) < \ell, \\ \tilde{H}_{0}(K_\ell(a, b))& {\rm if}\ d(a, b) = \ell,\end{cases}
\]
where $\tilde{H}_{\ast}$ denotes the reduced homology group.
\end{cor}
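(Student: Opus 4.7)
The plan is to read the corollary off of Theorem \ref{thm4.2}. That theorem supplies a chain-level isomorphism
\[ t : C_\ast(K_\ell(a,b), K'_\ell(a,b)) \longrightarrow MC_{\ast+2, \ell}(a,b) \]
in every non-negative degree; the sign change on the simplicial differential is irrelevant for homology. For $k \geq 3$, both $MC_{k,\ell}(a,b)$ and $MC_{k-1,\ell}(a,b)$ lie inside the range of this isomorphism, and taking homology immediately gives $MH_{k,\ell}(a,b) \cong H_{k-2}(K_\ell(a,b), K'_\ell(a,b))$.

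The remaining work is at the edge of the isomorphism range, $k = 2$, where $MC_{1,\ell}(a,b)$ is not covered by $t$ and the boundary map $\partial : MC_{2,\ell}(a,b) \to MC_{1,\ell}(a,b)$ must be analysed by hand. Since $\partial = -\partial_1$ in this degree and every generator $(a, x_1, b)$ satisfies $\mathrm{L}(a, x_1, b) = \ell$, the map $\partial_1$ sends $(a, x_1, b)$ to $(a, b)$ exactly when $d(a, b) = \ell$, and to $0$ otherwise. In the case $d(a,b) < \ell$, it follows that $\partial$ vanishes on $MC_{2, \ell}(a,b)$, so $MH_{2,\ell}(a,b) = MC_{2,\ell}(a,b)/\mathrm{im}(\partial_3)$; transporting via $t$ this becomes $C_0(K_\ell,K'_\ell)/\mathrm{im}(\partial_1) = H_0(K_\ell(a,b), K'_\ell(a,b))$, as claimed.

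For the remaining case $d(a,b) = \ell$, I would first observe that $K'_\ell(a,b) = \emptyset$: any simplex $\{x_{i_1}, \ldots, x_{i_k}\}$ in $K_\ell(a,b)$ produces a sequence of length at least $d(a,b) = \ell$, ruling out length $\leq \ell - 1$. Hence $C_\ast(K_\ell, K'_\ell) = C_\ast(K_\ell)$. Under $t$, the boundary $\partial$, which now sends every $(a, x_1, b)$ to $-(a, b)$, corresponds (up to sign) to the augmentation $\varepsilon : C_0(K_\ell(a,b)) \to \mathbb{Z}$. So $\ker \partial$ matches $\ker \varepsilon = \tilde{C}_0(K_\ell(a,b))$, and quotienting by $\mathrm{im}(\partial_3)$, which matches $\mathrm{im}(\partial_1^{\mathrm{simp}})$, gives $\tilde{H}_0(K_\ell(a,b))$. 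The only mildly subtle point in the whole argument is this identification of the boundary at $k = 2$ with the augmentation, together with the observation that $K'_\ell(a,b)$ collapses to the empty complex in the extremal case; everything else is a direct consequence of Theorem \ref{thm4.2}.
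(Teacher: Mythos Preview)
Your argument is correct and follows the same route as the paper: for $k\ge 3$ both proofs simply take homology of the chain isomorphism from Theorem~\ref{thm4.2}, and for $k=2$ both hinge on the same two observations, namely that $\partial_2$ on $MC_{2,\ell}(a,b)$ is zero when $d(a,b)<\ell$ and is (up to sign) the map ``send every generator to $(a,b)$'' when $d(a,b)=\ell$, together with $K'_\ell(a,b)=\emptyset$ in the latter case. The only cosmetic difference is packaging: the paper encodes the $k=2$ analysis in the short exact sequence $0\to MH_{2,\ell}(a,b)\to H_0(K_\ell,K'_\ell)\to \operatorname{Im}\partial_2\to 0$ and then reads off the two cases, whereas you identify $\partial_2$ directly with the augmentation and pass to $\tilde H_0$; your version is marginally more direct, but the content is the same.
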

\begin{proof}
Suppose that $\ell, k\ge 3$.
Since the homology of $C_\ast(K_\ell(a, b), K'_{\ell}(a, b))$ does not depend on the sign of the boundary map, we have
\[  MH_{k,\ell}(a, b)\cong H_{k-2}(K_\ell(a, b),K'_{\ell}(a, b)) \]
from Theorem \ref{thm4.2}. Hence the statement follows. In case $k=2$, we consider a sequence of submodules 
\[
{\rm Im} \ \partial_{3} \subset {\rm Ker} \ \partial_{2} \subset MC_{2, \ell}(a, b),
\]
where $\partial_{\ast}$ denotes differentials of $MC_{\ast, \ell}(a, b)$. Then we have a short exact sequence
\[
0 \longrightarrow {\rm Ker} \ \partial_{2}/ {\rm Im} \ \partial_{3} \longrightarrow MC_{2, \ell}(a, b)/{\rm Im}\  \partial_{3} \longrightarrow  MC_{2, \ell}(a, b)/ {\rm Ker}\ \partial_{2} \longrightarrow 0,
\]
which is isomorphic to 
\[
0 \longrightarrow MH_{2, \ell}(a, b) \longrightarrow H_{0}(K_\ell(a, b),K'_{\ell}(a, b)) \longrightarrow {\rm Im}\ \partial_{2} \longrightarrow 0.
\]
Then, by the definition of $\partial_{2}$, we have
\[
{\rm Im} \ \partial_{2} \cong \begin{cases} 0 & {\rm if}\ d(a, b) < \ell, \\ \mathbb{Z} & {\rm if}\ d(a, b) = \ell .\end{cases}
\]
Hence we obtain 
\[
H_{0}(K_\ell(a, b),K'_{\ell}(a, b))  \cong \begin{cases} MH_{2, \ell}(a, b) & {\rm if}\ d(a, b) < \ell, \\   MH_{2, \ell}(a, b) \oplus \mathbb{Z} & {\rm if}\ d(a, b) = \ell. \end{cases}
\]
Then we have
\[
MH_{2,\ell}(a, b)\cong H_0(K_\ell(a, b), K'_\ell(a, b))
\]
for $d(a, b)<\ell$. 
If $d(a, b)=\ell$, we see 
\[
H_0(K_\ell(a, b), K'_\ell(a, b))=H_0(K_\ell(a, b))
\]
since $K'_\ell(a, b)=\emptyset$. Therefore, it also holds that
\[MH_{2,\ell}(a, b)\cong \tilde{H}_0(K_\ell(a, b)).
\]
\end{proof}
Finally, we give an example of computation using the above method.
\begin{exam}
A graph $Sq_2$ (\cite[pp 14-15]{Lein2}) is described in Figure 3.
We compute the magnitude homology $MH_{\ast,4}(Sq_2)$.
\begin{center}
\begin{figure}[ht]
\includegraphics[width=9cm]{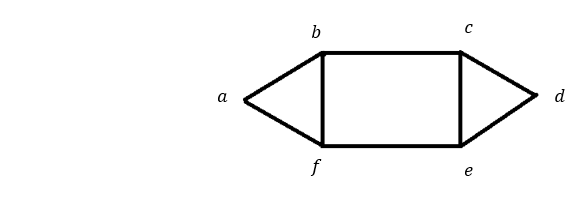}
\caption{Graph $Sq_2$.}
\end{figure}
\end{center}
The magnitude chain complex can be decomposed into 36 components which are classified into 8 types
\[(a, a),(a, b),(a, c),(a, d),(b, b),(b, c),(b, f),(b, e).\]

First, consider $(a, a)$-type components.
We have two $(a, a)$-type components, $(a, a)$- and $(d, d)$-component. 
These components have the same chain complexes by symmetry.
The paths belonging to $(a,a)$-component whose length are 4 are
\[(a, b, a, b, a),(a, b, a, f, a),(a, b, c, b, a),(a, b, f, b, a),\]
\[(a, f, a, f, a),(a, f, a, b, a),(a, f, e, f, a),(a, f, b, f, a).\]
The paths whose length are less than 4 are not related to the homology since they vanish by the quotient operation.
Assigning $(4-2)=2$-simplices for this 8 paths, 
we can construct the simplicial complex $K_4(a, a)$ and the subcomplex $K'_4(a, a)$ as shown in Figure 4.
\begin{center}
\begin{figure}[h]
\includegraphics[width=10.3cm]{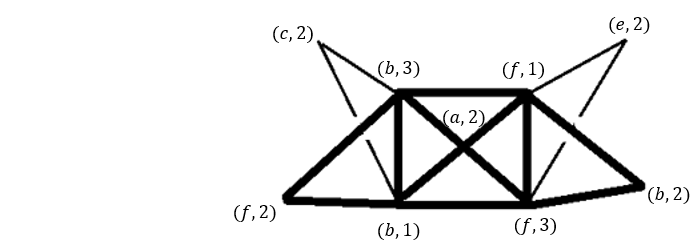}
\caption{The 2-dimensional simplicial complex $K_4(a, a)$ and the subcomplex $K'_4(a, a)$ (bold part).}
\end{figure}
\end{center}
By Corollary \ref{cor4.3}, we have
\[MH_{k, 4}(a, a)\cong \tilde{H}_{k-2}(|K_4(a, a)|/|K'_4(a, a)|)\textrm{ for }k\ge2. \]
Since $|K_4(a, a)|/|K'_4(a, a)|$ is homotopy equivalent to $S^2\vee S^2\vee S^2 \vee S^2\vee S^2\vee S^2$,
we have
\[
MH_{k, 4}(a, a)
\cong
\begin{cases}
\mathbb{Z}^6 & (k=4), \\
0 & (k\ne 4, k\ge 2).
\end{cases}
\]
In addition, we can also identify the generators of the magnitude homology from Figure 4.

$(a, d)$-type is one of types which give non-trivial elements to the 3rd magnitude homology. 
There are two $(a, d)$-type components, or $(a, d)$-component and $(d, a)$-component.
The paths belong to $(a, d)$-component are
\[(a, b, c, e, d), (a, b, f, e, d), (a, f, b, c, d), (a, f, e, c, d).\]
The resulting simplicial complexes $K_4(a,d)$ and $K'_4(a,d)$ are shown in Figure 5.
\begin{center}
\begin{figure}[h]
\includegraphics[width=11cm]{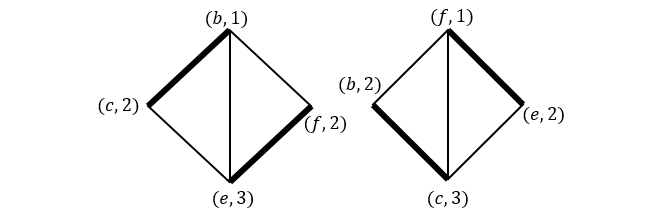}
\caption{The simplicial complex $K_4(a, d)$ and the subcomplex $K'_4(a, d)$ (bold part).}
\end{figure}
\end{center}
$|K_4(a, d)|/|K'_4(a, d)|$ is homotopy equivalent to $S^1\vee S^1$, so we have
\[
MH_{k, 4}(a, d)
\cong
\begin{cases}
\mathbb{Z}^2  & (k=3),\\
0 &( k\ne3, k\ge 2 ).
\end{cases}
\]
We can also compute the other components, and the result is following.
\begin{center}
	\begin{tabular}{|l|r|r|r|r|r|r|r|r|} \hline
		rank&(a, a)&(a, b)&(a, c)&(a, d)&(b, b)&(b, c)&(b, f)&(b, e) \\ \hline
		$k=2$&0&0&0&0&0&0&0&0 \\
		$k=3$&0&0&8&4&0&0&0&0 \\
		$k=4$&12&40&0&0&32&0&20&8 \\ \hline
	\end{tabular}
\end{center}
The rank of the 3rd magnitude homology is 12 and that of the 4th magnitude homology is 112, so that these coincide with the result \cite[TABLE 3]{Lein2}. 
\end{exam}

\section*{Acknowledgements}
The authors would like to express gratitude to Professor Osamu Saeki for bringing two of us together and supporting our research. Especially, the second author is grateful to Professor Saeki for supervising him as a master course student. The second author is  also grateful to Naoki Kitazawa, Hiroaki Kurihara and Dominik Wrazidlo for  helpful correction and comments. The first author is supported by the Program for Leading Graduate Schools, MEXT, Japan. He is also supported by JSPS KAKENHI Grant Number 17J01956.

\end{document}